\newtheorem{theorem}{Theorem}
\newtheorem{lemma}[theorem]{Lemma}
\newtheorem{coro}[theorem]{Corollary}
\newtheorem{proposition}[theorem]{Proposition}
\newcounter{other}            
\theoremstyle{definition}
\DeclareMathAccent{\widecheck}{0}{mathx}{"71}
\newcommand*{\rom}[1]{\expandafter\@slowromancap\romannumeral #1@}
\numberwithin{equation}{section}
\begin{document}

\title[Hankel forms and measures]{Hankel forms and measures on weighted Bergman spaces}

\author[Setareh Eskandari]{Setareh Eskandari}
\address{Setareh Eskandari  \\Department of Mathematics and Mathematical Statistics \\
	Ume{\aa} University\\
	90187 Ume{\aa}\\
	SWEDEN.}\email{setareh.eskandari@umu.se}

\author[Antti Per\"{a}l\"{a}]{Antti Per\"{a}l\"{a}}
\address{Antti Per\"{a}l\"{a} \\Department of Mathematics and Mathematical Statistics \\
	Ume{\aa} University\\
	90187 Ume{\aa}\\
	SWEDEN.} \email{antti.perala@umu.se}



%
\subjclass[2020]{Primary 47B35; Secondary 30H05, 46E20.}

\keywords{Hankel form, Hankel operator, Hankel measure, Bergman space, Doubling weight}

\thanks{S. Eskandari was funded by the postdoctoral scholarship JCK22-0052 from the Kempe Foundations.}


\begin{abstract}
We characterize the boundedness of Hankel forms and Hankel operators induced by measures on weighted Bergman spaces, where the weights satisfy an upper-doubling condition. We also characterize $A^p_\omega$ Hankel measures for $p\leq 2$. The proofs leverage the existing theory of weighted Bergman spaces and the recent results on two-weight fractional derivatives, also simplifying the recent $A^1$ duality for small Bergman spaces obtained by Pel\'aez and R\"atty\"a.
\end{abstract}

\maketitle



\section{Introduction}

\noindent Let $\mathbb{D}=\{|z|<1\}$ denote the open unit disk of the complex plane and $dA(z)=\pi^{-1}dxdy$, where $z=x+iy$, denote the normalized Lebesgue area measure on $\mathbb{D}$. We denote by $\mathcal{H}(\mathbb{D})$ the space of analytic functions on $\mathbb{D}$ and by $\mathcal{H}(\overline{\mathbb{D}})$ the space of functions that are analytic on an open set containing the closure $\overline{\mathbb{D}}$.

Given a complex Borel measure $\mu$ on the disk, we define the bilinear Hankel form $\textbf{H}_{\mu}$ by the formula
$$\textbf{H}_{\mu}(f,g)=\int_{\mathbb{D}}fgd\mu,$$
and call $\mu$ the symbol of $\textbf{H}_{\mu}$. Observe that this is well defined if $f,g \in \mathcal{H}(\overline{\mathbb{D}})$.

Let $\omega$ be a radial weight, which means in this paper that it is a non-negative, integrable function with $\omega(z)=\omega(|z|)$ for $z \in \mathbb{D}$. While $\omega$ is allowed to take the value $0$, we assume that it carries some mass near the boundary of the disk in the following sense: $\widehat{\omega}(\rho)>0$ for every $\rho \in [0,1)$, where $\widehat{\omega}(\rho)=\int_\rho^1 \omega(t)dt$. We denote $d\omega=\omega dA$, and for $0<p<\infty$ define the weighted Bergman spaces $A^p_\omega$ to consist of functions $f\in\mathcal{H}(\mathbb{D})$ with
$$\|f\|_{p,\omega}=\left(\int_{\mathbb{D}}|f|^p d\omega\right)^{1/p}<\infty.$$
In other words $A^p_\omega=\mathcal{H}(\mathbb{D})\cap L^p_\omega$. The assumptions on $\omega$ guarantee that the spaces $A^p_\omega$ are complete and that for every $z\in \mathbb{D}$ there exists a reproducing kernel $B_z^\omega \in A^2_\omega$ with
$$\langle f,B_z^\omega\rangle_\omega=\int_{\mathbb{D}}f\overline{B_z^\omega}d\omega=f(z),\quad f \in A^2_\omega.$$ The functions $B_z^\omega$ also give rise to the Bergman projection $P_\omega$, which is defined by
$$P_\omega[f](z)=\int_{\mathbb{D}}f\overline{B_z^\omega}d\omega \quad \text{and}\quad  P_\omega[\mu](z)=\int_{\mathbb{D}}\overline{B_z^\omega}d\mu$$
for $f \in L^1_\omega$ and $\mu$ a complex Borel measure, respectively. Observe that by our assumptions on $\omega$, the functions $B_z^\omega$ actually belong to $\mathcal{H}(\overline{\mathbb{D}})$, so the integrals are well-defined.

We can now define the (small) Hankel operator $H_\mu^\omega$ by
$$H_\mu^\omega f(z)=\int_{\mathbb{D}}fB_z^\omega d\mu,$$
which is well-defined for every $f \in \mathcal{H}(\overline{\mathbb{D}})$.

The Hankel forms and Hankel operators are closely connected to many questions in analysis. We mention Toeplitz operators, factorization, duality and the polynomially bounded operator problem \cite{AC, DRWW, JPR, PZ, pisier, tolo}. It is often natural to restrict the study to the case of anti-analytic symbols. Some works without this restriction do exist; we mention \cite{PRpreprint, TasVir, zhzh}.

In this paper we focus on the problem of characterizing the boundedness $\textbf{H}_{\mu}$ and $H_\mu^\omega$ in the context of the weighted Bergman spaces $A^p_\omega$, where the weight $\omega$ satisfies the conditions given above, and
$$\widehat{\omega}(\rho)\lesssim \widehat{\omega}\left(\frac{1+\rho}{2}\right),\quad \rho \in [0,1).$$
We denote by $\widehat{\mathcal{D}}$ the collection of all such weight and call them upper-doubling. If the weight $\omega$ satisfies
$$\widehat{\omega}(\rho)\geq C\widehat{\omega}\left(1-\frac{1-\rho}{K}\right)$$
for some $C,K>1$, we write $\omega \in \widecheck{\mathcal{D}}$ and call $\omega$ lower-doubling. Finally, we denote $\mathcal{D}=\widehat{\mathcal{D}}\cap \widecheck{\mathcal{D}}$.

These weights have been studied extensively in pioneering works of Pel\'aez and R\"atty\"a, and by several other authors \cite{DRWW, tan, 3, PPRhankel, PdlR, PdlR2, 2, PRkernel, 1, PRpreprint, antti2, antti1}. Note that $\mathcal{H}(\overline{\mathbb{D}})$ is dense in $A^p_\omega$, and we say that $\textbf{H}_{\mu}:A^p_\omega\times A^q_\omega\to \mathbb{C}$ is bounded if
$$|\textbf{H}_{\mu}(f,g)|\lesssim \|f\|_{p,\omega}\cdot \|g\|_{q,\omega},\quad f,g \in \mathcal{H}(\overline{\mathbb{D}}).$$
Likewise, we say that $H_\mu^\omega:A^p_\omega \to \overline{A^q_\omega}$ is bounded, if
$$\|H_\mu^\omega f\|_{q,\omega}\lesssim \|f\|_{p,\omega}, \quad f \in \mathcal{H}(\overline{\mathbb{D}}).$$
We also define $\|\textbf{H}_\mu\|_{A^p_\omega \times A^q_\omega}$ as the supremum of $|\textbf{H}_{\mu}(f,g)|$ where $f,g \in \mathcal{H}(\overline{\mathbb{D}})$ are taken from the unit balls of $A^p_\omega$ and $A^q_\omega$, respectively. The norm $\|H_\mu^\omega\|_{A^p_\omega \to A^q_\omega}$ is the supremum of $\|H_\mu^\omega f\|_{q,\omega}$, where $f \in \mathcal{H}(\overline{\mathbb{D}})$ is taken from the unit ball of $A^p_\omega$.

For a number $s\in (1,\infty)$ we let $s'$ denote its H\"older conjugate $s'=s/(s-1)$. Given a radial weight $\omega$ and $x\geq 1$, we denote by $W$ the following weight
\begin{equation}\label{W}
W(z)=W_{x,\omega}(z)=(x-1)\widehat{\omega}(z)^x(1-|z|)^{x-2}+x\omega(z)\widehat{\omega}(z)^{x-1}(1-|z|)^{x-1}.
\end{equation}
We are now ready to state the first two main results of this paper.

\begin{theorem}\label{TH1}
Let $\mu$ be a complex Borel measure on $\mathbb{D}$, $ \omega \in \widehat{\mathcal{D}}$ and $0<p,q<\infty$, and let $r$ satisfy $1/p+1/q=1/r$. Then $\textbf{H}_{\mu}:A^p_\omega \times A^{q}_\omega \to \mathbb{C}$ is bounded bilinear form if and only if
	
	\begin{itemize}
	
	\item[\rom{1}:]  $ P_\omega(\bar{\mu}) $ belongs to $A^{r'}_\omega $, if $r>1$.
	
	\item[\rom{2}:]  $ P_\omega(\bar{\mu})  $  belongs to ${\mathsf{D}}_{\omega} \, BMOA(\infty,\omega)$, if $r=1$.
	
	\item[\rom{3}:]  $ P_W(\bar{\mu})  $ belongs to $\mathcal{B}$, if $r<1$.
	
	\end{itemize}
	
In \rom{3} the weight $W=W_{1/r,\omega}$ is given in \eqref{W}. We also have that $\|\textbf{H}_\mu\|_{A^p_\omega \times A^q_\omega}\asymp \|P_\omega(\bar{\mu})\|_X$, where $X$ is the space in question depending on the case.
	
\end{theorem}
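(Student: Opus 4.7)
My plan is to reduce the boundedness of $\textbf{H}_\mu$ to a duality problem on $A^r_\omega$. The starting point is the identity
\[
\textbf{H}_\mu(f,g) \;=\; \langle fg,\, P_\omega(\bar{\mu})\rangle_\omega, \qquad f,g \in \mathcal{H}(\overline{\mathbb{D}}),
\]
which one derives by unwinding the definition of $P_\omega(\bar{\mu})$, applying Fubini, and invoking the reproducing property of $B_w^\omega$ on the holomorphic product $fg$; the hypothesis $f,g\in\mathcal{H}(\overline{\mathbb{D}})$ and the fact that $B_z^\omega\in\mathcal{H}(\overline{\mathbb{D}})$ ensure the integrals are absolutely convergent. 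With this identity, boundedness of $\textbf{H}_\mu$ on $A^p_\omega \times A^q_\omega$ is equivalent to boundedness of the linear functional $F \mapsto \langle F, P_\omega(\bar{\mu})\rangle_\omega$ on the cone $\{fg : f \in A^p_\omega,\, g \in A^q_\omega\}$ with its natural infimum norm.

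Combining this reduction with H\"older's inequality $\|fg\|_{r,\omega}\leq \|f\|_{p,\omega}\|g\|_{q,\omega}$, the three cases correspond to the three regimes of the duality theory of $A^r_\omega$. For $r>1$ the dual is $A^{r'}_\omega$ under the $\omega$-pairing, which directly yields Case I. For $r=1$ the dual is $\mathsf{D}_\omega BMOA(\infty,\omega)$ by the recent work of Pel\'aez--R\"atty\"a; the abstract announces that this identification is to be recovered as a byproduct, so I would expect the Hankel-form argument itself to furnish the nontrivial inclusion. For $r<1$ the dual of $A^r_\omega$ is the Bloch space $\mathcal{B}$, but the $\omega$-pairing is ill-defined there, so one passes to the fractional-derivative pairing carried by the weight $W=W_{1/r,\omega}$ of \eqref{W}. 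The precise form of $W$ is engineered so that $P_W(\bar\mu)\in\mathcal{B}$ represents the functional and so that a Littlewood--Paley-type identity rewrites $\int fg\, d\mu$ as a $W$-pairing against $P_W(\bar\mu)$. This is precisely where the two-weight fractional derivative theory referenced in the abstract enters.

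The chief technical obstacle is the necessity direction in each case: from boundedness of $\textbf{H}_\mu$ one must deduce that $P_\omega(\bar\mu)$ (or $P_W(\bar\mu)$) belongs to the appropriate space. This calls for a weak factorization of $A^r_\omega$, or of the predual of the BMOA/Bloch space in Cases II and III, into products from $A^p_\omega\cdot A^q_\omega$, so that the dual norm of the projection can be tested against such products. For the standard weight this is Horowitz's theorem; for $\omega\in\widehat{\mathcal{D}}$ one would extract it from atomic and kernel decomposition estimates drawn from the Pel\'aez--R\"atty\"a machinery. In Case III the further subtlety is that $P_\omega$ must be swapped for $P_W$, which requires a derivative identity matching the two projections on the relevant subspaces; this same mechanism is likely what produces the promised simplification of the $A^1$ duality proof.
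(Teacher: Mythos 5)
Your overall strategy --- representing $\textbf{H}_\mu(f,g)$ through $P_\omega(\bar\mu)$, reducing boundedness to that of a linear functional on $A^r_\omega$ via factorization and H\"older, and then invoking the three duality regimes ($A^{r'}_\omega$ for $r>1$, $\mathsf{D}_\omega BMOA(\infty,\omega)$ for $r=1$, the Bloch space under the $W$-pairing for $r<1$) --- is exactly the paper's. But your opening identity $\textbf{H}_\mu(f,g)=\langle fg,P_\omega(\bar\mu)\rangle_\omega$ is not valid as stated, and the claim of absolute convergence is false in general: for a complex Borel measure with no decay assumption, $P_\omega(\bar\mu)$ can fail to belong to $A^1_\omega$ (the paper remarks on this explicitly), so $\int_{\mathbb{D}}|fg|\,|P_\omega(\bar\mu)|\,d\omega$ may be infinite even for $fg\in\mathcal{H}(\overline{\mathbb{D}})$; likewise the Fubini hypothesis fails, since $\int_{\mathbb{D}}|B^\omega_z(\xi)|\,d\omega(z)=\|B^\omega_\xi\|_{1,\omega}$ grows logarithmically as $|\xi|\to1^-$ and need not be $|\mu|$-integrable. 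The paper repairs this by dilating: for $F\in\mathcal{H}(\overline{\mathbb{D}})$ and \emph{any} radial weight $\nu$,
\[
\int_{\mathbb{D}}F\,d\mu=\lim_{\rho\to1^-}\int_{\mathbb{D}}F\,\overline{\bigl(P_\nu(\bar\mu)\bigr)_\rho}\,d\nu,
\]
Fubini now being legitimate because the dilated kernels $B^\nu_{\rho z}$ are uniformly bounded. All three duality arguments must be run with this limit pairing; note also that taking $\nu=W$ here makes Case III immediate --- no ``derivative identity swapping $P_\omega$ for $P_W$'' is needed, contrary to what you anticipate.

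The second soft spot is the necessity direction, which you correctly identify as hinging on factorization but leave as something ``to be extracted from atomic and kernel decompositions.'' For $\omega\in\widehat{\mathcal{D}}$ this is a substantial theorem in its own right, namely the strong factorization of Korhonen and R\"atty\"a: every $F\in A^r_\omega$ with $1/r=1/p+1/q$ factors as $F=fg$ with $\|F\|_{r,\omega}\asymp\|f\|_{p,\omega}\|g\|_{q,\omega}$; the paper simply cites it (together with the small approximation remark that it suffices to bound the form on pairs $f\in A^p_\omega$, $g\in A^q_\omega$ with $fg\in\mathcal{H}(\overline{\mathbb{D}})$, since the factors need not extend analytically past the boundary). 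Finally, in Case II the identification of $(A^1_\omega)^*$ with $\mathsf{D}_\omega BMOA(\infty,\omega)$ under the standard limit pairing is not furnished by the Hankel-form argument, as you expect; the paper derives it beforehand (its Proposition on $A^1$ duality) by conjugating the Pel\'aez--R\"atty\"a $BMOA(\infty,\omega)$ duality with the fractional derivative $\mathsf{D}_\omega$, and then quotes it in the proof.
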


In the above theorem, $\mathcal{B}$ denotes the Bloch space, consisting of analytic functions $f$ on $\mathbb{D}$ with
$$\|f\|_{\mathcal{B}}=\sup_{z \in \mathbb{D}}(1-|z|^2)|f'(z)| + |f(0)|<\infty.$$
The space ${\mathsf{D}}_{\omega} \, BMOA(\infty,\omega)$ is a fractional integral of a certain $BMOA$ type spaces -- its definition is more complicated and postponed until Section 3.

The second main theorem could be equally well taken as a corollary to Theorem \ref{TH1}. The three cases can be read directly from it. However, note that here we require $1<q<\infty$, whereas in Theorem \ref{TH1} any positive $q$ is allowed.

\begin{theorem}\label{TH1b}
Let $\mu$ be a complex Borel measure on $\mathbb{D}$, $ \omega \in \widehat{\mathcal{D}}$ and $0<p<\infty$, $1<q<\infty$. Then $H_\mu^\omega : A^p_\omega \to \overline{A^q_\omega}$ is bounded if and only if $\textbf{H}_{\mu}:A^p_\omega \times A^{q'}_\omega \to \mathbb{C}$ is a bounded bilinear form. We also have that $\|H_\mu^\omega\|_{A^p_\omega \to A^q_\omega}\asymp \|P_\omega(\bar{\mu})\|_X$, where $X$ is the space in question depending on which case from Theorem \ref{TH1}.
\end{theorem}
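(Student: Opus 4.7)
The plan is to pass between $H_\mu^\omega$ and $\textbf{H}_\mu$ through the $L^q_\omega$--$L^{q'}_\omega$ duality. The key identity, valid for $f, g \in \mathcal{H}(\overline{\mathbb{D}})$, is
$$\textbf{H}_\mu(f,g) = \int_{\mathbb{D}} (H_\mu^\omega f)(z)\, g(z)\, d\omega(z).$$
To verify it I would swap the order of integration, which is permitted because $f$, $g$, and $B_z^\omega$ all belong to $\mathcal{H}(\overline{\mathbb{D}})$ and $\mu$ is a finite complex measure, and then use the reproducing property together with the symmetry $B_z^\omega(w) = \overline{B_w^\omega(z)}$ to collapse the inner integral in $z$ to the value $g(w)$.

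Given this identity, the forward direction is immediate. If $H_\mu^\omega : A^p_\omega \to \overline{A^q_\omega}$ is bounded, H\"older's inequality yields
$$|\textbf{H}_\mu(f,g)| \leq \|H_\mu^\omega f\|_{q,\omega}\,\|g\|_{q',\omega} \lesssim \|H_\mu^\omega\|_{A^p_\omega \to A^q_\omega}\,\|f\|_{p,\omega}\,\|g\|_{q',\omega}$$
for $f, g \in \mathcal{H}(\overline{\mathbb{D}})$, so $\textbf{H}_\mu$ is bounded with norm controlled by $\|H_\mu^\omega\|_{A^p_\omega\to A^q_\omega}$.

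For the converse, fix $f \in \mathcal{H}(\overline{\mathbb{D}})$ and consider the functional $\Lambda_f : g \mapsto \textbf{H}_\mu(f,g)$ on $\mathcal{H}(\overline{\mathbb{D}})$. By assumption its operator norm is at most $\|\textbf{H}_\mu\|_{A^p_\omega \times A^{q'}_\omega}\,\|f\|_{p,\omega}$, and since $\mathcal{H}(\overline{\mathbb{D}})$ is dense in $A^{q'}_\omega$ it extends to $A^{q'}_\omega$ with the same norm. The assumption $1 < q < \infty$ forces $1 < q' < \infty$, which is precisely the range in which $P_\omega$ is bounded on $L^{q'}_\omega$ for $\omega \in \widehat{\mathcal{D}}$; the standard result of Pel\'aez and R\"atty\"a then supplies the duality $(A^{q'}_\omega)^* \cong A^q_\omega$ through the pairing $\langle g,h\rangle_\omega = \int g\bar h\, d\omega$, with equivalent norms. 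Hence there is $h \in A^q_\omega$ with $\|h\|_{q,\omega} \lesssim \|\textbf{H}_\mu\|\,\|f\|_{p,\omega}$ satisfying $\textbf{H}_\mu(f,g) = \int g\,\overline{h}\, d\omega$ for all $g \in A^{q'}_\omega$. Testing both representations of this functional against $g = B_w^\omega \in \mathcal{H}(\overline{\mathbb{D}})$ for arbitrary $w \in \mathbb{D}$, and applying Fubini together with the reproducing property to the resulting left-hand side, identifies $H_\mu^\omega f(w) = \overline{h(w)}$ pointwise on $\mathbb{D}$. It follows that $\|H_\mu^\omega f\|_{q,\omega} = \|h\|_{q,\omega} \lesssim \|\textbf{H}_\mu\|\,\|f\|_{p,\omega}$, establishing the reverse inequality. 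The asymptotic $\|H_\mu^\omega\|_{A^p_\omega \to A^q_\omega} \asymp \|P_\omega(\bar\mu)\|_X$ is then a direct consequence of Theorem \ref{TH1} applied to the pair $(p,q')$.

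The only mildly delicate step is invoking the Bergman-space duality in the precise form needed: it is here that $1 < q < \infty$ is indispensable, which explains why Theorem \ref{TH1b} cannot cover the regime $q \leq 1$ that is allowed in Theorem \ref{TH1}.
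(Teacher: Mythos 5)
Your overall route is the same as the paper's: tie $H_\mu^\omega$ to $\textbf{H}_{\mu}$ via the $A^{q}_\omega$--$A^{q'}_\omega$ duality of Pel\'aez and R\"atty\"a and then invoke Theorem \ref{TH1} for the pair $(p,q')$. The converse direction (form bounded $\Rightarrow$ operator bounded) is correct: representing $g\mapsto \textbf{H}_{\mu}(f,g)$ by some $h\in A^q_\omega$ and testing with $g=B_w^\omega$ identifies $H_\mu^\omega f=\overline{h}$; note that no Fubini is needed there, since $\textbf{H}_{\mu}(f,B_w^\omega)=\int_{\mathbb{D}} f B_w^\omega\,d\mu$ is literally the definition of $H_\mu^\omega f(w)$.

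The gap is in the forward direction, in the unqualified identity $\textbf{H}_{\mu}(f,g)=\int_{\mathbb{D}}(H_\mu^\omega f)\,g\,d\omega$. The justification ``Fubini is permitted because $f$, $g$, $B_z^\omega\in\mathcal{H}(\overline{\mathbb{D}})$ and $\mu$ is finite'' is insufficient: $B_z^\omega(\xi)$ is bounded for each fixed $z$ but not uniformly in $z$, and the absolute integrability Fubini requires amounts to $\int_{\mathbb{D}}\|B_\xi^\omega\|_{1,\omega}\,d|\mu|(\xi)<\infty$, which by the kernel estimates (Theorem \ref{kernel} with $\nu=\omega$, $p=1$) is a logarithmic moment condition $\int_{\mathbb{D}}\log\frac{e}{1-|\xi|}\,d|\mu|(\xi)<\infty$ that a general finite complex Borel measure need not satisfy, and that is not implied by the boundedness of $H_\mu^\omega$ (which concerns $\mu$, with cancellation, not $|\mu|$). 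This is precisely why the paper states the operator--form relation with a dilation, as in \eqref{hoperator}: $\lim_{\rho\to1^-}\langle (H_\mu^\omega f)_\rho,\overline{g}\rangle_\omega=\textbf{H}_{\mu}(f,g)$, where the interchange is legitimate because the kernels $B^\omega_{\rho z}$ are uniformly bounded. To repair your step, use the dilated identity and then, under the standing assumption $\|H_\mu^\omega f\|_{q,\omega}\lesssim\|f\|_{p,\omega}$, remove the dilation in the limit (the dilations of a function in $L^q_\omega$ converge in norm, or equivalently pair with $g_\rho$ and let $\rho\to1^-$); after that your H\"older argument goes through. A further small inaccuracy: you motivate the duality $(A^{q'}_\omega)^*\cong A^q_\omega$ by saying $1<q'<\infty$ is ``precisely the range in which $P_\omega$ is bounded on $L^{q'}_\omega$ for $\omega\in\widehat{\mathcal{D}}$''; boundedness of $P_\omega$ on $L^{q'}_\omega$ generally needs more than $\omega\in\widehat{\mathcal{D}}$, but the duality itself holds for $\omega\in\widehat{\mathcal{D}}$ (as cited in the paper), and that is all your argument uses.
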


The proofs of the above theorems are based on duality and factorization of these weighted Bergman spaces, and are in the spirit of a proof that was already considered well-known in the paper of Tolokonnikov \cite{tolo}. 

Since there have been a number of papers with results close to ours, a comparison is in order. In \cite{zhzh} Zhou studied essentially the same Hankel operator and form and obtained cases \rom{2} and \rom{3} when $\omega(z)=(\alpha+1)(1-|z|^2)^\alpha$ with $\alpha>-1$ is a standard weight. In \cite{tan} Korhonen and R\"atty\"a obtain our case \rom{1} with the same weights, but for $d\mu=\overline{f}d\omega$ with $f$ analytic. In \cite{DRWW} and with the same restriction on the symbols, all the cases presented here are considered under the condition $\omega \in \mathcal{D}$, but with possibly different weights for the domain and the target space. In the very recent paper \cite{PRpreprint}, the authors consider the case \rom{2} for $d\mu=fd\omega$ with $f \in L^1_{\omega log}\subset L^1_\omega$ and $\omega \in \mathcal{D}$. In that paper the result is given in terms of an operator $V_{\omega,\nu}$, but this operator is in fact a fractional derivative operator.

The advantage of the present work is that we allow $\omega \in \widehat{\mathcal{D}}$ and the symbol class of complex Borel measures is the most general among the papers discussed. By using some recent results on two-weight fractional derivatives, we also think that our proofs are considerably simpler than many of the proofs from the mentioned papers. The case \rom{2} for $\omega\in \widehat{\mathcal{D}}$ is probably the most novel, and would probably be out of reach without the use of fractional derivatives. 

If $\omega \in \mathcal{D}$, then case \rom{2} becomes more clear, as ${\mathsf{D}}_{\omega} \, BMOA(\infty,\omega)$ becomes the Bloch space due to the duality result in \cite{1}. In fact, the cases \rom{2} and \rom{3} can be written in an simpler way if the weight in question is more regular. We also point out the recent paper \cite{zhzh}, where slightly different Hankel operators and forms were studied for Hardy spaces and standard weighted Bergman spaces. In the context of the present paper, these would be
$$\widetilde{H}_\mu^\omega f=\int_{\mathbb{D}}fB^\omega_{\overline{z}}(\xi)d\omega(\xi)\quad \text{ and } \quad \widetilde{\textbf{H}}_\mu(f,g)=\int_{\mathbb{D}}f\widetilde{g}d\mu,$$
where $\widetilde{g}(z)=\overline{g(\overline{z})}$. Since $g\mapsto \widetilde{g}$ is an isometric isomorphism of $A^p_\omega$ for any radial $\omega$, Theorem \ref{TH1} holds for these objects with little or no changes. 

The Hankel form and Hankel operator defined above are also related to the problem of Hankel measures, also called pseudo-Carleson measures. A complex Borel measure $\mu$ is called a Hankel measure for $A^p_\omega$ (which is not a standard terminology) if
$$\left|\int_{\mathbb{D}}f^2 d\mu\right|\lesssim \|f\|_{p,\omega}^2$$
for every $f \in \mathcal{H}(\overline{\mathbb{D}})$. This kind of measures were first studied by Xiao \cite{xi1, xi2}, who mentions that he learned about the problem from Peetre. Recently, there have been a number of papers on the topic, we mention \cite{ARSW, bao, LH, SY, wang, zorboska}.

Observe that for the integral above to exist unconditionally for every $f \in A^2_\omega$, we would need to have that the total variation $|\mu|$ is Carleson measure for $A^p_\omega$, which would trivialize the question. Our third main theorem characterizes $p$-Hankel measures. When $p=2$, we are admittedly restricted to the case $\omega \in \mathcal{D}$ due to our poor understanding of the space ${\mathsf{D}}_{\omega} \, BMOA(\infty,\omega)=\mathcal{B}$.

\begin{theorem}\label{TH2}
Let $\mu$ be a complex Borel measure and $\omega \in \widehat{\mathcal{D}}$. Then $\mu$ is a Hankel measure for $A^p_\omega$ if and only if
\begin{itemize}
\item[\rom{1}:] $P_W(\overline{\mu}) \in \mathcal{B}$, if $p<2$;
\item[\rom{2}:] $P_\omega(\overline{\mu})\in \mathcal{B}$, if $p=2$ and $\omega \in \mathcal{D}$.
\end{itemize}
In \rom{1} the weight $W=W_{2/p,\omega}$ is given in \eqref{W}.
\end{theorem}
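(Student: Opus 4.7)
The plan is to deduce Theorem \ref{TH2} from Theorem \ref{TH1} by showing that, for $p\leq 2$, being a Hankel measure for $A^p_\omega$ is equivalent to the bilinear form $\textbf{H}_\mu:A^p_\omega\times A^p_\omega\to\mathbb{C}$ being bounded. Once this equivalence is in hand, Theorem \ref{TH1} applied with $q=p$, so that the conjugate index is $r=p/2$, gives case \rom{2} from Theorem \ref{TH1} when $p=2$ (hence $r=1$) and case \rom{3} when $p<2$ (hence $r<1$, with $W=W_{1/r,\omega}=W_{2/p,\omega}$). The identification ${\mathsf{D}}_\omega\, BMOA(\infty,\omega)=\mathcal{B}$ under $\omega\in\mathcal{D}$ (mentioned after Theorem \ref{TH1}) then yields the simplified $p=2$ statement.

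One direction is immediate: if $\textbf{H}_\mu$ is bounded on $A^p_\omega\times A^p_\omega$, specializing $g=f$ shows $\mu$ is a Hankel measure. For the converse, I would use a polarization argument. Since $fg=gf$, the form $\textbf{H}_\mu$ is symmetric, so
$$4\,\textbf{H}_\mu(f,g)=\textbf{H}_\mu(f+g,f+g)-\textbf{H}_\mu(f-g,f-g),\qquad f,g\in\mathcal{H}(\overline{\mathbb{D}}).$$
Assuming the Hankel measure inequality, this yields
$$|\textbf{H}_\mu(f,g)|\lesssim \|f+g\|_{p,\omega}^2+\|f-g\|_{p,\omega}^2.$$
The right-hand side is controlled by $\|f\|_{p,\omega}^2+\|g\|_{p,\omega}^2$: for $p\geq 1$ this is the standard triangle inequality squared, while for $0<p<1$ one uses $\|h+k\|_{p,\omega}^p\leq \|h\|_{p,\omega}^p+\|k\|_{p,\omega}^p$ combined with the convexity of $x\mapsto x^{2/p}$ (writing $u=\|h\|_{p,\omega}^p$, $v=\|k\|_{p,\omega}^p$ and invoking $(u+v)^{2/p}\leq 2^{2/p-1}(u^{2/p}+v^{2/p})$).

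Next I would remove the asymmetry between $\|f\|_{p,\omega}^2+\|g\|_{p,\omega}^2$ and the desired bound $\|f\|_{p,\omega}\|g\|_{p,\omega}$ via homogeneity. Since $\textbf{H}_\mu(tf,t^{-1}g)=\textbf{H}_\mu(f,g)$ for every $t>0$, the previous estimate gives
$$|\textbf{H}_\mu(f,g)|\lesssim t^2\|f\|_{p,\omega}^2+t^{-2}\|g\|_{p,\omega}^2,$$
and minimizing over $t$ at $t^2=\|g\|_{p,\omega}/\|f\|_{p,\omega}$ produces $|\textbf{H}_\mu(f,g)|\lesssim \|f\|_{p,\omega}\|g\|_{p,\omega}$. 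Thus $\textbf{H}_\mu$ is bounded on $A^p_\omega\times A^p_\omega$ precisely when $\mu$ is a Hankel measure for $A^p_\omega$.

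In reality there is no serious obstacle here: the proof is essentially a polarization-plus-homogeneity trick reducing to the already-proved Theorem \ref{TH1}. The only point requiring a small remark is the $p<1$ case of the quasi-triangle inequality, which is harmless. Even the restriction $\omega\in\mathcal{D}$ in case \rom{2} is not something the present argument must confront, since it enters only through the external identification of the fractionally-integrated $BMOA$-type space with $\mathcal{B}$; case \rom{1} needs no extra regularity beyond $\omega\in\widehat{\mathcal{D}}$, which matches Theorem \ref{TH1} case \rom{3} verbatim.
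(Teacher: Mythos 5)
Your argument is correct, but it is a genuinely different route from the paper's. You reduce Theorem \ref{TH2} to Theorem \ref{TH1}: since $\textbf{H}_\mu$ is a symmetric bilinear (not sesquilinear) form, the identity $4fg=(f+g)^2-(f-g)^2$ together with the quasi-triangle inequality (with the $2^{2/p-1}$ constant when $p<1$) and the scaling $(f,g)\mapsto(tf,t^{-1}g)$ does show that the Hankel measure condition for $A^p_\omega$ is equivalent to boundedness of $\textbf{H}_\mu:A^p_\omega\times A^p_\omega\to\mathbb{C}$, and then Theorem \ref{TH1} with $q=p$, $r=p/2$, plus the identification $\mathsf{D}_\omega\,BMOA(\infty,\omega)=\mathcal{B}$ for $\omega\in\mathcal{D}$, gives both cases \rom{1} and \rom{2} with the correct weight $W_{2/p,\omega}$. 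The paper does not argue this way: it obtains sufficiency directly from the dualities of $A^{p/2}_\omega$, and proves necessity by testing the Hankel measure inequality against the kernels $F_z(\xi)=(1-\overline{z}\xi)^{-2-\beta}$ with $\beta$ large, invoking the kernel norm estimates of Theorem \ref{kernel} and the two-weight fractional derivative characterization of the Bloch space (Theorem \ref{bfrac}) applied to $\mathsf{R}^{\eta,\sigma}P_\eta[\overline{\mu}]$ with $\eta=\omega$ or $\eta=W$. Your route is shorter, avoids kernel estimates entirely, and—notably—does not stop at $p=2$: with $p>2$ it combines with case \rom{1} of Theorem \ref{TH1} to give $P_\omega(\overline{\mu})\in A^{(p/2)'}_\omega$, which is precisely the case the authors describe as eluding them (they anticipate atomic decomposition and Kahane--Khinchine arguments); so, granting Theorem \ref{TH1}, the polarization reduction actually strengthens Theorem \ref{TH2}. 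The trade-off is that your proof inherits all the machinery behind Theorem \ref{TH1} (the factorization Theorem \ref{KR} and the $A^{r'}_\omega$, $A^1_\omega$ and small-exponent dualities), whereas the paper's test-function argument is independent of factorization and yields explicit pointwise decay of fractional derivatives of $P_\eta[\overline{\mu}]$, which is the kind of information that survives in settings where weak factorization is unavailable.
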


We note that the sufficiency in Theorem \ref{TH2} clearly follows from duality for the space $A^{p/2}_\omega$, as for $\omega \in \mathcal{D}$ one has ${\mathsf{D}}_{\omega} \, BMOA(\infty,\omega)=\mathcal{B}$. The necessity follows from taking a suitable test function and using the recent fractional derivative characterizations of the Bloch space \cite{PRW}. Also in the case $p>2$, duality easily gives a sufficient condition. As for the necessity, one usually proves such estimates with atomic decomposition and Kahane-Khinchine inequalities. We expect this to be the case here as well, but regret to admit to not having succeeded in this.

The paper is structured as follows. In Section 2, we discuss the basic properties of fractional derivatives. These can be used to simplify our main theorem. In Section 3, we study the space ${\mathsf{D}}_{\omega} \, BMOA(\infty,\omega)$, which is involved in the case \rom{2} of the main theorems obtaining a dual pairing for $A^1_\omega$, which is easier to understand than that of \cite{1}. Section 4 deals with Hankel forms and Hankel operators and contains the proof of Theorem \ref{TH1}. Section 5 is dedicated to Hankel measures and the proof of Theorem \ref{TH2}. In the fifth and last section of the paper, we discuss how Theorem \ref{TH1} can be simplified when $\omega \in \mathcal{D}$, and further when $\omega(z)=(\alpha+1)(1-|z|^2)^\alpha$ ($\alpha>-1$) is a standard weight, allowing us to compare with results in \cite{zhzh}. In that section, we also consider some possible generalizations.

Finally, a word about notation. By $A(x)\lesssim B(x)$ we mean that there exists a constant $C>0$, not depending on $A$ and $B$ so that $A(x)\leq B(x)$ for every $x$. If both $A(x)\lesssim B(x)$ and $B(x)\lesssim A(x)$ hold, we write $A(x)\asymp B(x)$.

\bigskip

\section{Fractional derivatives}

\noindent Kernel functions induced by radial weights give rise to fractional derivatives. In \cite{zhu1}, Zhu studied such operators, and their theory was further streamlined in the book \cite{zhuholom}. In fact, this concept has a rather long history that can be traced back to the 1932 work of Hardy and Littlewood \cite{HL}, and further to 1769 work of Euler, see \cite{PRW}. By denoting by
$$\omega_x = \int_0^1 \omega(s)s^xds$$
the $x$ moment of the radial weight $\omega$, we define for $f(z)=\sum_{n=0}^\infty f_nz^n$:
$$\mathsf{R}^{\omega,\nu}f(z)=\sum_{n=0}^\infty \frac{\omega_{2n+1}}{\nu_{2n+1}}f_n z^n$$
and
$$\mathsf{D}^\omega f(z)=\sum_{n=0}^\infty \frac{1}{\omega_{2n+1}}f_n z^n,\quad \mathsf{D}_\omega=\sum_{n=0}^\infty \omega_{2n+1}f_nz^n.$$
The operator $\mathsf{R}^{\omega,\nu}$ is the two-weight fractional derivative, which has been studied in \cite{antti2, PRW}. The operators $\mathsf{D}^\omega$ and $\mathsf{D}_\omega$ were studied in \cite{PdlR, PdlR2} by Pel\'aez and de la Rosa.

Since $\lim_{n\to \infty}(\omega_{2n+1})^{1/n}=1$, all operators above are bijections on $\mathcal{H}(\mathbb{D})$ and $\mathcal{H}(\overline{\mathbb{D}})$, and satisfy $(\mathsf{R}^{\omega,\nu})^{-1}=\mathsf{R}^{\nu,\omega}$, $(\mathsf{D}^\omega)^{-1}=\mathsf{D}_\omega$, $\mathsf{R}^{\omega,\nu}=\mathsf{D}^\nu \mathsf{D}_\omega= \mathsf{D}^\omega \mathsf{D}_\nu$. By denoting 
$$\omega_{+}(\rho)=\int_{\rho}^1 \omega(s)\frac{ds}{s},$$
one can also show that $\mathsf{D}^\omega=\mathsf{R}^{1,\omega_+}$, where $1$ in this formula is simply the constant weight equal to one.

The key property of the operator $\mathsf{R}^{\omega,\nu}$ is that
$$\mathsf{R}^{\omega,\nu} B_z^\omega=B_z^\nu,$$
and that
$$\mathsf{R}^{\omega,\nu}P_\omega(\mu)=P_\nu(\mu),$$
for any complex Borel measure $\mu$. These will be useful for our main theorem.

Recently there have been several characterizations of the Bloch space in terms of fractional derivatives, see \cite{PdlR, PRW}. The following result, which is Theorem 1 in \cite{PRW} will be especially useful for us.

\begin{theorem}[Per\"al\"a, R\"atty\"a and Wang \cite{PRW}]\label{bfrac}
Let $\omega \in \mathcal{D}$ and $\nu \in \widehat{\mathcal{D}}$. Then for $f \in \mathcal{H}(\mathbb{D})$ is holds that
$$\|f\|_{\mathcal{B}}\asymp \sup_{z \in \mathbb{D}}\frac{\widehat{\nu}(z)}{\widehat{\omega}(z)}|\mathsf{R}^{\omega,\nu}f(z)|$$
if and only if
$$\int_0^{\rho}\frac{\widehat{\omega}(t)}{\widehat{\nu}(t)(1-t)}dt\lesssim \frac{\widehat{\omega}(\rho)}{\widehat{\nu}(\rho)}.$$
\end{theorem}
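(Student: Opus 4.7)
The plan is to prove both directions of the equivalence by exploiting the Taylor-coefficient action of $\mathsf{R}^{\omega,\nu}$ together with the moment asymptotics available under the doubling hypotheses. The starting observation is that, for $\omega\in\mathcal{D}$ and $\nu\in\widehat{\mathcal{D}}$, one has the sharp moment comparison $\omega_{2n+1}\asymp \widehat{\omega}(1-1/n)$ (and the corresponding upper bound for $\nu$), which converts the sequence $\omega_{2n+1}/\nu_{2n+1}$ into the continuous ratio $\widehat{\omega}(t)/\widehat{\nu}(t)$ via the substitution $t=1-1/n$, so that sums against Taylor coefficients translate into integrals of $\widehat{\omega}/\widehat{\nu}$ against $dt/(1-t)$.

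For the necessity direction, which is the more transparent, I would test the assumed equivalence on the standard Bloch family $f_{\rho}(z)=\log(1/(1-\rho z))$, $\rho\in[0,1)$, which satisfies $\|f_{\rho}\|_{\mathcal{B}}\asymp 1$ uniformly in $\rho$. Computing
\[
\mathsf{R}^{\omega,\nu}f_{\rho}(\rho)=\sum_{n\geq 1}\frac{\omega_{2n+1}}{\nu_{2n+1}}\frac{\rho^{2n}}{n},
\]
applying the moment asymptotics, and passing to the integral via $t=1-1/n$ (so that $dn/n=dt/(1-t)$), this sum is comparable to $\int_{0}^{\rho}\widehat{\omega}(t)/(\widehat{\nu}(t)(1-t))\,dt$, since the factor $\rho^{2n}$ is of order $1$ for $n\lesssim 1/(1-\rho)$, i.e.\ for $t\leq \rho$, and decays exponentially beyond that. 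The hypothesised bound $\widehat{\nu}(\rho)/\widehat{\omega}(\rho)\,|\mathsf{R}^{\omega,\nu}f_{\rho}(\rho)|\lesssim 1$ then forces exactly the claimed integral inequality.

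For sufficiency, assume the integral condition and start from the reproducing-kernel identity $\mathsf{R}^{\omega,\nu}P_\omega(\mu)=P_\nu(\mu)$ applied to $d\mu=f\,d\omega$, giving the representation $\mathsf{R}^{\omega,\nu}f(z)=\int_{\mathbb{D}} f(w)\,\overline{B_z^{\nu}(w)}\,\omega(w)\,dA(w)$. Combine the Bloch growth bound $|f(w)|\lesssim \|f\|_{\mathcal{B}}\log(2/(1-|w|))+|f(0)|$ with the sharp pointwise estimates on $|B_z^{\nu}(w)|$ for $\nu\in\widehat{\mathcal{D}}$ coming from the Pel\'aez--R\"atty\"a theory, and integrate in polar coordinates; the integral condition is exactly what is required so that the resulting one-dimensional radial integral is controlled by $\widehat{\omega}(z)/\widehat{\nu}(z)$ times $\|f\|_{\mathcal{B}}$. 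For the reverse inequality $\|f\|_{\mathcal{B}}\lesssim \sup_{z}\tfrac{\widehat{\nu}(z)}{\widehat{\omega}(z)}|\mathsf{R}^{\omega,\nu}f(z)|$, invert using $f=\mathsf{R}^{\nu,\omega}\mathsf{R}^{\omega,\nu}f$, apply the analogous kernel representation for $\mathsf{R}^{\nu,\omega}$, and differentiate once in $z$ to estimate $(1-|z|)|f'(z)|$, once again reducing to the same integral condition after using the moment asymptotics.

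The main obstacle is the sufficient-direction kernel estimate: one has to pair the Bloch growth of $f$ with the two-weight pointwise bound on $|B_z^\nu|$ sharply enough that the integral condition (and no stronger hypothesis) suffices to close the estimate. The full doubling of $\omega$ enters through the two-sided moment equivalence $\omega_{2n+1}\asymp \widehat{\omega}(1-1/n)$, while only upper-doubling is available for $\nu$, so verifying that these asymmetric assumptions align correctly in both directions of the equivalence, and in particular in the reverse inequality, is the most delicate point.
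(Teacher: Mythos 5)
First, a point of reference: the paper does not prove Theorem \ref{bfrac} at all --- it is imported verbatim as Theorem 1 of the preprint \cite{PRW} --- so your proposal has to be judged on its own merits rather than against an internal argument.

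Your necessity direction is essentially right: $f_\rho(z)=\log\frac{1}{1-\rho z}$ is uniformly in $\mathcal{B}$, all terms of $\mathsf{R}^{\omega,\nu}f_\rho(\rho)$ are positive, and the moment asymptotics $\omega_{2n+1}\asymp\widehat{\omega}(1-1/n)$, $\nu_{2n+1}\asymp\widehat{\nu}(1-1/n)$ for upper-doubling weights, together with $\widehat{\omega}(1-1/n)\asymp\widehat{\omega}(1-1/(n+1))$, convert the sum over $n\le 1/(1-\rho)$ into $\int_0^\rho\widehat{\omega}(t)/(\widehat{\nu}(t)(1-t))\,dt$, so testing the inequality $\sup_z\tfrac{\widehat{\nu}}{\widehat{\omega}}|\mathsf{R}^{\omega,\nu}f_\rho|\lesssim\|f_\rho\|_{\mathcal{B}}$ at $z=\rho$ yields the integral condition. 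Also, the representation $\mathsf{R}^{\omega,\nu}f=P_\nu[f\,d\omega]$ is legitimate here, because $\omega\in\mathcal{D}$ forces $\widehat{\omega}(t)\lesssim(1-t)^{\alpha}$ for some $\alpha>0$, hence $\mathcal{B}\subset A^1_\omega$.

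The genuine gap is exactly at the step you yourself flag as the main obstacle, and it is not merely delicate --- the proposed estimate cannot close. Pairing the growth bound $|f(w)|\lesssim\|f\|_{\mathcal{B}}\log\frac{2}{1-|w|}$ with modulus estimates for $B_z^\nu$ is structurally lossy. Already for standard weights $\widehat{\omega}(t)\asymp(1-t)^{a+1}$, $\widehat{\nu}(t)\asymp(1-t)^{b}$ with $b>a+1$ (so the integral condition holds), the sharp pointwise bound is $|B_z^\nu(w)|\lesssim|1-z\overline{w}|^{-(b+1)}$, and the Forelli--Rudin type computation gives
$$\int_{\mathbb{D}}\log\frac{2}{1-|w|}\,\frac{(1-|w|)^{a}}{|1-z\overline{w}|^{b+1}}\,dA(w)\asymp\frac{\log\frac{2}{1-|z|}}{(1-|z|)^{b-a-1}},$$
which exceeds the target $\widehat{\omega}(|z|)/\widehat{\nu}(|z|)\asymp(1-|z|)^{a+1-b}$ by a logarithmic factor; the purely radial bound $|B_z^\nu(w)|\lesssim 1/(\widehat{\nu}(|zw|)(1-|zw|))$ suggested by the coefficient formula discards all angular decay and loses even a full power. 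The issue is that the Bloch seminorm controls $|f|$ only up to a logarithm but controls $|f'|$ sharply, and indeed for $f(z)=\log\frac{1}{1-z}$ the fractional derivative exhibits cancellation that the naive bound ignores. A correct argument must exploit this cancellation --- e.g.\ rewrite $\mathsf{R}^{\omega,\nu}f$ (by a summation by parts on the coefficient multipliers, or by replacing $f$ by $f-f(z)$ inside $P_\nu[f\,d\omega]$) as an integral against $f'$, so that $|f'(w)|\le\|f\|_{\mathcal{B}}/(1-|w|^2)$ enters and the integral condition closes the resulting radial integral without a logarithm. As written, your scheme only yields a bound of the form $\sup_z\tfrac{\widehat{\nu}}{\widehat{\omega}}|\mathsf{R}^{\omega,\nu}f(z)|\lesssim\|f\|_{\mathcal{B}}\log\frac{2}{1-|z|}$ and hence not the stated equivalence; the reverse inequality via $f=\mathsf{R}^{\nu,\omega}\mathsf{R}^{\omega,\nu}f$ is more plausible (its input bound carries no logarithm), but there you still need to justify $\mathsf{R}^{\omega,\nu}f\in A^1_\nu$ before invoking the kernel representation.
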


\bigskip

\section{The space $BMOA(\infty,\omega)$}

\noindent If $\omega \in \mathcal{D}$, it is well-known that the dual of $A^1_\omega$ equals the Bloch space under the standard dual pairing. If $\omega \in \widehat{\mathcal{D}}$, the problem is much more subtle. An analytic $f$ belongs to the space $BMOA(\infty,\omega)$ if
$$\|f\|_{BMOA(\infty,\omega)}=\sup_{0<\rho<1}\|f_\rho\|_{BMO}\cdot\widehat{\omega}(\rho)<\infty.$$
Here $\|\cdot\|_{BMO}$ is the classical $BMO$ norm appearing in Fefferman's $H^1$ duality, but we will not need its exact formula in this paper. In \cite{1} the dual of $A^1_\omega$ is shown to be isomorphic to $BMOA(\infty,\omega)$ under the rather curious pairing
$$\langle f,g\rangle_{\omega \circ \omega}=\lim_{\rho\to 1^-}\sum_{n=0}^\infty f_n \overline{g_n}\rho^{2n+1}(\omega_{2n+1})^2,$$
whenever the limit exists. Note that 
$$\int_{\mathbb{D}}f_\rho\overline{g_\rho}d\omega=\sum_{n=0}^\infty f_n\overline{g_\rho}\rho^{2n}\omega_{2n+1}.$$
Using $\mathsf{D}_\omega$ on $g$, this leads to
$$\langle f,g\rangle_{\omega \circ \omega}=\lim_{\rho\to 1^-}\rho \int_{\mathbb{D}}f_\rho\overline{(\mathsf{D}_\omega g)_\rho}d\omega.$$
Note that if $g \in BMOA(\infty,\omega)$, then $g$ is contained in $P_\omega(L^\infty)\subset \mathcal{B}$. So, if also $f \in \mathcal{H}(\overline{\mathbb{D}})$, then certainly
$$\langle f,g\rangle_{\omega \circ \omega}=\int_{\mathbb{D}}f\overline{(\mathsf{D}_\omega g)}d\omega.$$
Let $\mathsf{D}_\omega BMOA(\infty,\omega)$ denote the image of $BMOA(\infty,\omega)$ under $\mathsf{D}_\omega$. It can be equipped with the norm
$$\|g\|_{\mathsf{D}_\omega BMOA(\infty,\omega)}=\|\mathsf{D}^\omega g\|_{BMOA(\infty,\omega)}+|g(0)|.$$
This leads to the following alternative characterization of $(A^1_\omega)^*$ with $\omega \in \widehat{\mathcal{D}}$ under the standard pairing.

\begin{proposition}\label{A1}
Let $\omega \in \widehat{\mathcal{D}}$. Then the dual of $A^1_\omega$ equals $\mathsf{D}_\omega BMOA(\infty,\omega)$ under the pairing
$$\lim_{\rho\to 1^-}\int_{\mathbb{D}}f_\rho\overline{g}d\omega=\lim_{\rho\to 1^-}\int_{\mathbb{D}}f\overline{g_\rho}d\omega,\quad f\in A^1_\omega, g \in \mathsf{D}_\omega BMOA(\infty,\omega).$$
\end{proposition}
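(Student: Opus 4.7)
The plan is to deduce the proposition from the isomorphism $(A^1_\omega)^*\cong BMOA(\infty,\omega)$ established in \cite{1} by transporting the curious pairing through the bijection $\mathsf{D}_\omega$. Since $\mathsf{D}_\omega:BMOA(\infty,\omega)\to\mathsf{D}_\omega BMOA(\infty,\omega)$ is an isomorphism by the very definition of the norm on the target space, the norm equivalence in the proposition will be automatic once the functional identification is in place. The task is therefore to verify, for $g\in\mathsf{D}_\omega BMOA(\infty,\omega)$ with $h=\mathsf{D}^\omega g\in BMOA(\infty,\omega)$, that both displayed limits exist and equal $\Phi_h(f)=\langle f,h\rangle_{\omega\circ\omega}$ for all $f\in A^1_\omega$, where $\Phi_h$ is the bounded functional supplied by \cite{1}.

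The starting point is the identity recorded in the excerpt,
$$\int_{\mathbb{D}}\phi\,\overline{\mathsf{D}_\omega h}\,d\omega=\langle\phi,h\rangle_{\omega\circ\omega},\qquad \phi\in\mathcal{H}(\overline{\mathbb{D}}).$$
Applied with $\phi=f_\rho$, which lies in $\mathcal{H}(\overline{\mathbb{D}})$ for every $\rho\in(0,1)$ and every $f\in A^1_\omega$, it gives $\int_{\mathbb{D}}f_\rho\overline{g}\,d\omega=\Phi_h(f_\rho)$. Since $f_\rho\to f$ in $A^1_\omega$ (standard for radial $\omega$) and $\Phi_h\in(A^1_\omega)^*$, one concludes that $\lim_{\rho\to 1^-}\int_{\mathbb{D}}f_\rho\overline{g}\,d\omega=\Phi_h(f)$, which is the first desired equality.

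For the symmetry $\int_{\mathbb{D}}f_\rho\overline{g}\,d\omega=\int_{\mathbb{D}}f\overline{g_\rho}\,d\omega$ I would argue as follows: on polynomials $p$ the orthogonality relation $\int_{\mathbb{D}}z^n\overline{z^m}\,d\omega=\delta_{n,m}\cdot c\,\omega_{2n+1}$ reduces both integrals to $c\sum_{n}p_n\overline{g_n}\omega_{2n+1}\rho^n$; for fixed $\rho\in(0,1)$ both sides are bounded linear functionals of $f$ on $A^1_\omega$ (the right side via $\|g_\rho\|_\infty$, the left side via the identification with $\Phi_h(f_\rho)$ from the previous paragraph), so density of polynomials extends the symmetry to all $f\in A^1_\omega$. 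Passing to the limit $\rho\to 1^-$ then yields the second displayed equality.

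The main obstacle I anticipate is the integrability $g\in L^1_\omega$, needed to make sense of $\int_{\mathbb{D}}f_\rho\overline{g}\,d\omega$ before any computation; this should follow from $h\in BMOA(\infty,\omega)\subset\mathcal{B}$ combined with the decay of the moments $\omega_{2n+1}$ in the upper-doubling class, applied to the coefficient expansion of $\mathsf{D}_\omega h$. The other small technicality, $\|f-f_\rho\|_{1,\omega}\to 0$ for $\omega\in\widehat{\mathcal{D}}$, is standard and can be obtained from uniform boundedness of dilations on $A^1_\omega$ together with density of polynomials.
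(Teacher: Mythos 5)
Your argument is essentially the paper's own: the proposition is obtained there by transporting the $(A^1_\omega)^*\cong BMOA(\infty,\omega)$ duality of Pel\'aez--R\"atty\"a through the bijection $\mathsf{D}_\omega$, using the coefficient identity that turns the pairing $\langle\cdot,\cdot\rangle_{\omega\circ\omega}$ into the standard (dilated-limit) pairing against $\mathsf{D}_\omega g$, exactly as you do. Your additional steps (applying the identity to $f_\rho$, using $\|f-f_\rho\|_{1,\omega}\to 0$ and continuity of the functional, and checking the symmetry of the two dilated pairings on polynomials) merely make explicit the limiting details the paper leaves implicit, and the integrability of $\mathsf{D}_\omega g$ against $\omega$ that you flag is glossed over in the paper in the same way.
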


It is worth remarking that if $\omega \in \mathcal{D}$, then the dual of $A^1_\omega$ is $\mathcal{B}$ under the pairing of Proposition \ref{A1} (see Theorem 3 of \cite{1}). This gives us the interesting identity $\mathsf{D}_\omega BMOA(\infty,\omega)=\mathcal{B}$, or equivalently $BMOA(\infty,\omega)=\mathsf{D}^\omega \mathcal{B}$, for $\omega \in \mathcal{D}$. Further specializing to standard weights, this leads to characterizations for the Bloch spaces in terms of dilated BMO norms.

\bigskip

\section{Hankel forms and Hankel operators}

\noindent We begin with the following lemma.

\begin{lemma}
Let $\mu$ be a complex Borel measure, $\nu$ be a radial weight and $F \in \mathcal{H}(\overline{\mathbb{D}})$. Then,
$$\int_{\mathbb{D}}Fd\mu=\lim_{\rho\to 1^-}\int_{\mathbb{D}}F\overline{(P_\nu(\overline{\mu}))_\rho}d\nu.$$
\end{lemma}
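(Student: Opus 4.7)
The plan is to unfold the definition of $P_\nu(\overline{\mu})$, swap the order of integration using Fubini, recognize the inner integral as a dilated reproducing formula, and then pass to the limit using uniform convergence of $F_\rho$ to $F$ on $\overline{\mathbb{D}}$.

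First, I would use the definition of the Bergman projection to write
\[
P_\nu(\overline{\mu})(z)=\int_{\mathbb{D}}\overline{B_z^\nu(w)}\,d\overline{\mu}(w) =\overline{\int_{\mathbb{D}}B_z^\nu(w)\,d\mu(w)},
\]
so that $\overline{(P_\nu(\overline{\mu}))_\rho(z)}=\int_{\mathbb{D}}B_{\rho z}^\nu(w)\,d\mu(w)$. Plugging this into the right-hand side of the claim and applying Fubini's theorem — justified because $F$ is bounded on $\overline{\mathbb{D}}$, $|\mu|$ is finite, $\nu$ is integrable, and the kernel $B_{\rho z}^\nu(w)$ is uniformly bounded in $(z,w)\in\mathbb{D}\times\mathbb{D}$ for each fixed $\rho<1$ — I get
\[
\int_{\mathbb{D}}F(z)\overline{(P_\nu(\overline{\mu}))_\rho(z)}\,d\nu(z) =\int_{\mathbb{D}}\left(\int_{\mathbb{D}}F(z)B_{\rho z}^\nu(w)\,d\nu(z)\right)d\mu(w).
\]

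Next I would evaluate the inner integral. Writing $B_z^\nu(w)=\sum_{n\geq 0}\frac{(w\overline{z})^n}{2\nu_{2n+1}}$ for the reproducing kernel of the radial weight $\nu$ and expanding $F(z)=\sum_n F_n z^n$, an elementary termwise integration in polar coordinates gives $\int_{\mathbb{D}}z^k\overline{z}^n\,d\nu(z)=2\nu_{2n+1}\delta_{k,n}$, so that
\[
\int_{\mathbb{D}}F(z)B_{\rho z}^\nu(w)\,d\nu(z) =\sum_{n\geq 0}F_n(\rho w)^n=F(\rho w).
\]
(Equivalently, this is the reproducing identity $F(\rho w)=\langle F,B_{\rho w}^\nu\rangle_\nu$ together with the conjugate symmetry $B_{\rho z}^\nu(w)=\overline{B_{\rho w}^\nu(z)}$.) Thus the double integral collapses to $\int_{\mathbb{D}}F_\rho(w)\,d\mu(w)$.

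Finally, since $F\in \mathcal{H}(\overline{\mathbb{D}})$ is analytic on a neighborhood of the closed disk, $F_\rho\to F$ uniformly on $\overline{\mathbb{D}}$ as $\rho\to 1^-$; combined with $|\mu|(\mathbb{D})<\infty$, dominated convergence yields
\[
\lim_{\rho\to 1^-}\int_{\mathbb{D}}F_\rho\,d\mu=\int_{\mathbb{D}}F\,d\mu,
\]
which is the claimed identity. The only step that requires any care is justifying the Fubini exchange, but the bounds above make this entirely routine; no growth assumptions on $\nu$ or $\mu$ beyond what is already in force are needed.
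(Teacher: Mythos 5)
Your proof is correct and follows essentially the same route as the paper's: unfold $\overline{(P_\nu(\overline{\mu}))_\rho(z)}=\int_{\mathbb{D}}B_{\rho z}^\nu(w)\,d\mu(w)$, exchange the integrals via Fubini using the uniform boundedness of the dilated kernel, identify the inner integral as $F(\rho w)$ by the reproducing property and the symmetry $B^\nu_{\rho z}(w)=\overline{B^\nu_{\rho w}(z)}$, and finish with the uniform convergence $F_\rho\to F$ for $F\in\mathcal{H}(\overline{\mathbb{D}})$ together with $|\mu|(\mathbb{D})<\infty$. The explicit Taylor-coefficient computation you include is just a spelled-out version of the reproducing identity the paper invokes directly, so there is no substantive difference.
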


\begin{proof}
We observe that $P_\nu(\overline{\mu})$ is a well-defined analytic function, and
\begin{align*}
	&\int_{\mathbb{D}}F\overline{(P_\nu(\overline{\mu}))_\rho}d\nu\\
	=&\int_{\mathbb{D}}F(z)\left(\int_{\mathbb{D}}B_{\rho z}^{\nu}(\xi)d\mu(\xi)\right)d\nu(z)\\
	=&\int_{\mathbb{D}}\left(\int_{\mathbb{D}}F(z)\overline{B_{\rho\xi}^\nu(z)}d\nu(z)\right)d\mu(\xi)\\
	=&\int_{\mathbb{D}}F_\rho d\mu.
\end{align*}
Here we used that for a radial $\nu$ one has $B^\nu_{\rho z}(\xi)=\overline{B^\nu_{\rho \xi}(z)}$. The change in the order of the integration is legal, since the kernel functions $B_{\rho z}^\nu$ are uniformly bounded on the disk; $|B_{z\rho}^\nu(\xi)|\leq C$ for every $z,\xi \in \mathbb{D}$. Since $F_\rho\to F$ uniformly when $\rho\to 1^-$ and $F\in \mathcal{H}(\overline{\mathbb{D}})$, we obtain the desired identity.

\end{proof}

Note that since $P_\nu(\overline{\mu})$ might be outside $A^1_\nu$, we cannot remove the dilatation from it in general, even if $F \in \mathcal{H}(\overline{\mathbb{D}})$. However, in many cases $P_\nu(\overline{\mu}) \in A^1_\omega$, and the dilatation can be removed. The left-hand side of the identity is always well-defined for $F$ as above.

From this lemma it easily follows that if $f,g \in \mathcal{H}(\overline{\mathbb{D}})$, then
\begin{equation}\label{hform}
\textbf{H}_{\mu}(f,g)=\lim_{\rho\to 1^-}\int_{\mathbb{D}}fg\,\overline{(P_\omega(\overline{\mu}))_\rho}d\omega.
\end{equation}

Similarly, one obtains that
\begin{equation}\label{hoperator}
\lim_{\rho\to 1^-}\langle (H_\mu^\omega f)_{\rho},\overline{g}\rangle_\omega=\textbf{H}_{\mu}(f,g),
\end{equation}
which is a well-known characteristic of a Hankel operator.

In order to smoothly prove Theorem \ref{TH1}, we would need a factorization theorem for Bergman spaces. Fortunately, it was recently proven by Korhonen and R\"atty\"a \cite{tan}.

\begin{theorem}[Korhonen and R\"atty\"a \cite{tan}]\label{KR}
Let $\omega \in \widehat{\mathcal{D}}$ and $0<p,q,r<\infty$ satisfying $1/r=1/p+1/q$. Then for every $F \in A^r_\omega$, there exist $f \in A^p_\omega$ and $g \in A^q_\omega$ so that $F=fg$, and
$$\|F\|_{r,\omega}\asymp \|f\|_{p,\omega}\cdot \|g\|_{q,\omega}.$$
\end{theorem}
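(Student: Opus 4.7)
The plan is to reduce the factorization to two ingredients: a Horowitz-type contractive zero divisor adapted to $A^r_\omega$, and the standard observation that zero-free analytic functions on the simply connected disk admit analytic powers of arbitrary real order.

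First I would extract the zeros of $F$. Given nonzero $F \in A^r_\omega$ with zero sequence $\{z_k\}$ (counted with multiplicities), the goal is to produce an analytic $H=H_F$ on $\mathbb{D}$ with zero set exactly $\{z_k\}$, satisfying $|H(z)|\le 1$ and
$$\|F\|_{r,\omega} \asymp \|F/H\|_{r,\omega}.$$
The upper bound is immediate from $|H|\le 1$; the lower bound is the content of a weighted contractive-divisor construction, the analog of Horowitz's product for $A^p$. In the $\omega\in\widehat{\mathcal{D}}$ setting one cannot rely on the pointwise weight asymptotics available for $(1-|z|^2)^\alpha$, but the integrated estimates in terms of $\widehat{\omega}$ from the Pel\'aez--R\"atty\"a framework should be strong enough to push the construction through.

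Next, set $\varphi = F/H_F$, which is zero-free on the simply connected disk, so there exists an analytic $\psi$ with $\psi^r=\varphi$. Since $\psi$ is itself zero-free, the functions
$$\varphi_1 = \psi^{r^2/p}, \qquad \varphi_2 = \psi^{r^2/q}$$
are well-defined analytic, and the hypothesis $1/p+1/q=1/r$ yields $r^2/p+r^2/q=r$, so that $\varphi_1\varphi_2=\psi^r=\varphi$ and $F=(H_F\varphi_1)\varphi_2$. Defining $f=H_F\varphi_1$ and $g=\varphi_2$, I compute
$$\|g\|_{q,\omega}^q = \int_{\mathbb{D}} |\psi|^{r^2}\,d\omega = \int_{\mathbb{D}}|\varphi|^r\,d\omega \asymp \|F\|_{r,\omega}^r,$$
and likewise $\|f\|_{p,\omega}^p\le \int_{\mathbb{D}}|\psi|^{r^2}\,d\omega \asymp \|F\|_{r,\omega}^r$ using $|H_F|\le 1$, so that
$$\|f\|_{p,\omega}\cdot\|g\|_{q,\omega}\lesssim\|F\|_{r,\omega}^{r/p+r/q}=\|F\|_{r,\omega}.$$
H\"older's inequality with conjugate exponents $p/r$ and $q/r$ provides the matching lower bound $\|F\|_{r,\omega}=\|fg\|_{r,\omega}\le\|f\|_{p,\omega}\|g\|_{q,\omega}$.

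The bulk of the work is concentrated in the first step. The root extraction and the exponent bookkeeping in the second step are entirely formal, but constructing the contractive zero divisor $H_F$ for a general upper-doubling weight requires quantitative control of where the zeros of $F$ can accumulate, phrased purely in terms of $\widehat{\omega}$. An alternative would be to start from an atomic decomposition of $A^r_\omega$ into reproducing-kernel building blocks associated with a suitable auxiliary weight, but turning such a sum into a genuine product is not obviously easier, so the Horowitz route appears cleaner provided the contractive divisor can be built -- and this is exactly the step I expect to be the main obstacle.
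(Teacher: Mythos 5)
There is a genuine gap, and it sits exactly where you located the ``main obstacle'': the object you postulate in Step 1 does not exist. You require an analytic $H$ whose zero set is exactly the zero sequence of $F$, with the pointwise bound $|H|\le 1$ on $\mathbb{D}$. But zero sequences of functions in $A^r_\omega$ need not satisfy the Blaschke condition -- already for $\omega\equiv 1$, which belongs to $\widehat{\mathcal{D}}$, the classical Bergman space $A^r$ contains functions whose zeros $\{z_k\}$ have $\sum_k(1-|z_k|)=\infty$ -- and any analytic function bounded by $1$ that vanishes on a non-Blaschke sequence is identically zero. So for such $F$ there is no $H$ with the two properties you need, independently of how good your $\widehat{\omega}$-estimates are. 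Nor can you fall back on the known divisors: the Horowitz product $\prod_k b_k(2-b_k)$ is not pointwise bounded by $1$, and the Hedenmalm/Duren--Khavinson--Shapiro--Sundberg contractive divisors are expansive multipliers (their modulus exceeds $1$ on a large set), are tied to biharmonic Green function positivity, and are unavailable for general $\omega\in\widehat{\mathcal{D}}$. The pointwise bound is not a cosmetic convenience in your argument: it is what lets you dump \emph{all} the zeros into the single factor $f=H\varphi_1$ and still estimate $\|f\|_{p,\omega}$ by $\int|\varphi_1|^p\,d\omega$.

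The paper itself does not prove Theorem \ref{KR}; it quotes it from Korhonen and R\"atty\"a \cite{tan}, whose argument follows the Horowitz scheme your second step mimics (divide out zeros, extract roots, H\"older for the lower bound -- that part of your proposal is fine), but with two essential changes that avoid the impossible divisor. First, the divisor is the non-contractive Horowitz-type product, for which one proves only a one-sided division estimate of the form $\|F/G\|_{r,\omega}\lesssim\|F\|_{r,\omega}$ using the $\widehat{\omega}$-machinery. Second, and crucially, the zero sequence is \emph{split} between the two factors, with a balanced splitting chosen so that the subproducts $G_1,G_2$ (with $G=G_1G_2$) satisfy growth estimates of the type $|G_j|^{2}\lesssim|G|$ (suitably adapted to the exponents $p,q$); one then sets $f=G_1(F/G)^{r/p}$ and $g=G_2(F/G)^{r/q}$, and the subproduct estimates replace the pointwise bound $|H|\le1$ in controlling $\|f\|_{p,\omega}$ and $\|g\|_{q,\omega}$. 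Without this splitting step your construction cannot be repaired, so the proposal as written does not yield the theorem.
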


\noindent \emph{Proof of Theorems \ref{TH1} and \ref{TH1b}:} 
Observe that the boundedness of the Hankel form (as given in this paper) is equivalent with
$$|\textbf{H}_\mu (f,g)|\lesssim \|f\|_{p,\omega}\cdot\|g\|_{q,\omega}$$
for $f \in A^p_\omega$ and $g \in A^q_\omega$ for which $fg \in \mathcal{H}(\overline{\mathbb{D}})$. Indeed, one direction is obvious and the other follows by approximation.

Let now $F \in \mathcal{H}(\overline{\mathbb{D}})\subset A^r_\omega$. By Theorem \ref{KR}, $F$ can be written as $fg$ with $f \in A^p_\omega$ and $g \in A^{q}_\omega$ and
$$\|F\|_{r,\omega}\asymp \|f\|_{p,\omega}\cdot \|g\|_{q,\omega}.$$

Observe that the theorem does not seem to give that also $f$ and $g$ can be chosen to be in $\mathcal{H}(\overline{\mathbb{D}})$, but by the remark above, it is not a problem.

Now the boundedness of both the Hankel form is reduced to the boundedness of the functional
$$F\mapsto \lim_{\rho\to 1^-}\int_{\mathbb{D}}F\,\overline{(P_\omega(\overline{\mu}))_\rho}d\omega$$
on $A^r_\omega$ that is, to duality of these spaces.

The case \rom{1} is standard application of duality $(A^r_\omega)^*=A^{r'}_\omega$, see \cite{1}. For the case \rom{3}, note that \eqref{hform} is true for any radial weight. In \cite{antti1}, it was proven that if $0<r<1$, then the dual of $A^r_\omega$ is equal to the Bloch space under the pairing
$$\int_{\mathbb{D}}f\overline{g}dW,$$
where $W$ is as in the statement of the theorem. 

For the case \rom{2} that is when $r=1$, we use the duality result for $A^1_\omega$ obtained in Proposition \ref{A1}. The result follows immediately.

Regarding Theorem \ref{TH1b}. Assume now $1<q<\infty$ and use duality $(A^q_\omega)^*=A^{q'}_\omega$, which was proven for upper doubling weights in \cite{1}. Equations \eqref{hform} and \eqref{hoperator} give that boundedness of $H_\mu^\omega : A^p_\omega \to \overline{A^q_\omega}$ is equivalent with the boundedness of the Hankel form $\textbf{H}_\mu:A^p_\omega \times A^{q'}_\omega \to \mathbb{C}$. Theorem \ref{TH1b} now is a particular case of Theorem \ref{TH1}. The norm estimates follow from the same estimates for the dual spaces. The proof of both theorems is complete.

\bigskip

\section{Hankel measures}

\noindent We now turn to our result on Hankel measures. Observe that if $f \in A^p_\omega$, then $f^2 \in A^{p/2}_\omega$ and
$$\int_{\mathbb{D}}f^2 d\mu=\int_{\mathbb{D}} f^2 \overline{P_\nu(\overline{\mu})}d\nu$$ for any radial weight $\nu$ with $\widehat{\nu}>0$. The sufficiency in Theorem \ref{TH2} comes therefore immediately from the corresponding dualities with $\nu=\omega$ when $p=2$ and $\nu=W$ with $W=W_{p/2}$ for $p<2$. Similarly, the sufficiency for $p>2$ would also follow.

To prove the necessity, we only need the estimates for the norms of the kernel function from \cite{PRkernel}. The following theorem is a simplified variant of Theorem 1 and Corollary 2 of that paper.

\begin{theorem}[Pel\'aez and R\"atty\"a \cite{PRkernel}]\label{kernel}
Let $\omega,\nu \in \widehat{\mathcal{D}}$. Then
$$\|B_z^\nu\|_{p,\omega}^p\asymp \int_0^{|z|}\frac{\widehat{\omega}(t)}{\widehat{\nu}(t)^p (1-t)^p}dt,\quad |z|\to 1^-.$$
Moreover, if 
$$\int_0^{\rho}\frac{\widehat{\omega}(t)}{\widehat{\nu}(t)^p(1-t)^p}dt\lesssim \frac{\widehat{\omega}(\rho)}{\widehat{\nu}(\rho)^p(1-\rho)^{p-1}},\quad \rho\to 1^-,$$
then
$$\|B_z^\nu\|_{p,\omega}^p\asymp \frac{\widehat{\omega}(|z|)}{\widehat{\nu}(|z|)^p(1-|z|)^{p-1}},\quad |z|\to 1^-.$$
\end{theorem}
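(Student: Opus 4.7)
My proof plan reduces the norm calculation to a one-dimensional integral. Writing in polar coordinates,
$$\|B_z^\nu\|_{p,\omega}^p = 2\int_0^1 M_p(r,B_z^\nu)^p\,\omega(r)r\,dr,\qquad M_p(r,f)^p=\int_0^{2\pi}|f(re^{i\theta})|^p\,\frac{d\theta}{2\pi},$$
the crucial analytical input is the sharp circular-mean estimate
$$M_p(r,B_z^\nu)^p\asymp\frac{1}{\widehat{\nu}(r|z|)^p(1-r|z|)^{p-1}}\qquad\text{as }r|z|\to 1^-.$$
Since $B_z^\nu(w)=\sum_n(\bar z w)^n/(2\nu_{2n+1})$ has non-negative Taylor coefficients in the variable $\bar z w$, I would obtain this estimate by grouping the Taylor series dyadically around the index $n\asymp 1/(1-r|z|)$ and invoking the moment asymptotic $\nu_{2n+1}\asymp\widehat{\nu}(1-1/n)$, which characterizes $\widehat{\mathcal{D}}$. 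Standard $L^p$ bounds for such lacunary blocks, combined with the upper-doubling of $\widehat{\nu}$, then collapse the contributions to the stated formula.

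With the circular mean in hand, the next step is a change of variable $t=r|z|$ followed by integration by parts, converting $\omega(t)\,dt$ into $-d\widehat{\omega}(t)$. The doubling of $\widehat{\omega}$ yields $\widehat{\omega}(t/|z|)\asymp\widehat{\omega}(t)$ uniformly in $t$ when $|z|$ is near $1$, and the boundary contribution at $t=|z|$ is absorbed into the integral by the same doubling. This produces the first claimed asymptotic
$$\|B_z^\nu\|_{p,\omega}^p\asymp\int_0^{|z|}\frac{\widehat{\omega}(t)}{\widehat{\nu}(t)^p(1-t)^p}\,dt.$$

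For the ``Moreover'' part, the hypothesis supplies the upper bound directly. The matching lower bound I would obtain by restricting the integration to the interval $[(1+|z|)/2,|z|]$: there the doubling properties of $\widehat{\omega}$ and $\widehat{\nu}$ make each of $\widehat{\omega}(t)$, $\widehat{\nu}(t)$, and $1-t$ comparable to its value at $|z|$, so that the integral over this subinterval is comparable to
$$(1-|z|)\cdot\frac{\widehat{\omega}(|z|)}{\widehat{\nu}(|z|)^p(1-|z|)^p}=\frac{\widehat{\omega}(|z|)}{\widehat{\nu}(|z|)^p(1-|z|)^{p-1}},$$
as required.

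The main obstacle is the circular-mean asymptotic, since for general $\nu\in\widehat{\mathcal{D}}$ the kernel $B_z^\nu$ admits no closed form and the analysis is sensitive to the whole tail of its Taylor series; the rest is routine doubling-based manipulation of one-dimensional integrals.
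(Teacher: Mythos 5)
Your overall scheme (circular means, then a one--dimensional integration in $r$) is the right machinery --- it is essentially how this result is proved in \cite{PRkernel}, which the paper simply cites --- but your ``crucial analytical input'' is false as stated, and this is a genuine gap rather than a technicality. For $\nu\in\widehat{\mathcal{D}}$ the correct sharp estimate is
$$M_p^p(r,B_z^\nu)\asymp \int_0^{r|z|}\frac{dt}{\widehat{\nu}(t)^p(1-t)^p},\qquad r|z|\to 1^-,$$
and this integral is \emph{not} in general comparable to $\widehat{\nu}(r|z|)^{-p}(1-r|z|)^{1-p}$. Already for a standard weight $\nu(z)=(\beta+1)(1-|z|^2)^\beta$ at the critical exponent $p(2+\beta)=1$ the kernel is $(1-\bar z w)^{-(2+\beta)}$ and $M_p^p(s,B_z^\nu)\asymp\log\frac{e}{1-s|z|}$, whereas your formula predicts a bounded quantity; for slowly decaying tails such as $\widehat{\nu}(t)\asymp(\log\frac{e}{1-t})^{-\alpha}$ (which are in $\widehat{\mathcal{D}}$ but not in $\widecheck{\mathcal{D}}$) the discrepancy is a full power of the logarithm. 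Your dyadic-block computation, done carefully, produces a sum of the form $\sum_{2^k\lesssim (1-s)^{-1}}2^{k(p-1)}\nu_{2^k}^{-p}$, which is comparable to the integral above but is dominated by its last term only under an extra regularity hypothesis --- precisely the kind of condition appearing in the ``Moreover'' part. Indeed, if your pointwise circular-mean asymptotic were true, the first display of the theorem would immediately collapse to the second one for all $\omega,\nu\in\widehat{\mathcal{D}}$, making the ``Moreover'' hypothesis superfluous; the theorem is stated with the integral exactly because that collapse fails in general.

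Once the circular-mean estimate is replaced by the integral form, your remaining steps survive in spirit: integrating in $r$ against $\omega(r)r\,dr$, using Fubini and the upper doubling of $\widehat{\omega}$ (together with $1-r\leq 1-r|z|\leq 2(1-r)$ for $r\leq|z|$) gives the first asymptotic, and the ``Moreover'' part then follows as you say --- the hypothesis gives the upper bound and a one-doubling-step interval gives the lower bound. Note, however, that your test interval $[(1+|z|)/2,|z|]$ is empty, since $(1+|z|)/2>|z|$; you mean the interval of length $1-|z|$ ending at $|z|$, i.e.\ $[2|z|-1,|z|]$, on which $1-t\asymp 1-|z|$, $\widehat{\omega}(t)\geq\widehat{\omega}(|z|)$ and $\widehat{\nu}(t)\lesssim\widehat{\nu}(|z|)$ by upper doubling, yielding the claimed lower bound.
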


\noindent \emph{Proof of necessity in Theorem \ref{TH2}:} Let $F_z(\xi)=(1-\overline{z}\xi)^{-2-\beta}$, with $\beta>-1$. This is the reproducing kernel induced by the weight $\nu(z)=(\beta+1)(1-|z|^2)^\beta$. Observe that $\widehat{\nu}(z)\asymp (1-|z|)^{1+\beta}$. Since $\omega \in \widehat{\mathcal{D}}$ it there exists (see \cite{PRkernel}) $\gamma>0$ such that
$$\frac{\widehat{\omega}(\rho_1)}{(1-\rho_1)^\gamma}\lesssim \frac{\widehat{\omega}(\rho_2)}{(1-\rho_2)^\gamma},\quad 0\leq \rho_1<\rho_2<1.$$

Let now $\beta$ be chosen so that $p(\beta+1)+p>\gamma+1$ (that is, $\beta$ is big enough). Then for $t<\rho$ one has
$$\frac{\widehat{\omega}(t)}{\widehat{\nu}(t)^p(1-t)^p}\asymp\frac{\widehat{\omega}(t)}{(1-t)^{(p(\beta+1)+p-\gamma-1)+\gamma+1}}\lesssim \frac{\widehat{\omega}(\rho)}{(1-\rho)^{\gamma}}\frac{\widehat{\omega}(t)}{(1-t)^{(p(\beta+1)+p-\gamma-1)+1}},$$
so the latter part of Theorem \ref{kernel} gives
$$\|F_z\|_{A^p_\omega}^2\asymp \frac{\widehat{\omega}(|z|)^{2/p}}{(1-|z|)^{2\beta+4-2/p}},$$
when $z$ is close to the boundary.

Assuming that $\mu$ is an $\omega$-Hankel measure this yieds
$$\left|\int_{\mathbb{D}}\frac{d\mu(\xi)}{(1-\overline{z}\xi)^{4+2\beta}}\right|\lesssim \frac{\widehat{\omega}(|z|)^{2/p}}{(1-|z|^2)^{2\beta+4-2/p}}.$$

Let now $\sigma(z)=(1-|z|^2)^{2+2\beta}$, and $\eta$ be any radial weight with $\widehat{\eta}>0$. The above estimate then implies that
$$|\mathsf{R}^{\eta,\sigma}P_\eta[\overline{\mu}](z)|\lesssim \frac{\widehat{\omega}(|z|)^{2/p}}{(1-|z|^2)^{2\beta+4-2/p}}.$$

If now $p=2$, by choosing $\eta=\omega \in \mathcal{D}$ and $\beta$ big enough (the reasoning is similar to the earlier reasoning regarding the size of $\beta$), an application of Theorem \ref{bfrac} gives that $P_\omega(\overline{\mu}) \in \mathcal{B}$. 

If on the other hand $p<2$, we let $\eta(z)=W(z)=\widehat{\omega}(z)^{2/p}(1-|z|)^{2/p-2}$ and observe that by \cite{antti1} this implies 

$$\widehat{W}(|z|)=\widehat{\omega}(|z|)^{2/p}(1-|z|^2)^{2/p-1},$$
and that $W\in \mathcal{D}$. As in the previous case, by Theorem \ref{bfrac} this yields for large enough $\beta$ that
$P_W(\overline{\mu})\in \mathcal{B}$. The proof is now complete.

\bigskip

\section{Further remarks and corollaries}

\noindent Since the Hankel form and the Hankel operator are usually generated by an anti-analytic symbol, we mention how these results can be obtained. Let $f$ be analytic and consider $d\mu=\overline{f}d\omega$.

\begin{coro}
Let $ \omega \in \widehat{\mathcal{D}}$, $d\mu=\overline{f}d\omega$ with $f \in A^1_\omega$, and $0<p,q<\infty$, and let $r$ satisfy $1/p+1/q=1/r$. Then $\textbf{H}_{\mu}:A^p_\omega \times A^{q}_\omega \to \mathbb{C}$ is bounded bilinear form if and only if
	
	\begin{itemize}
	
	\item[\rom{1}:]  $f\in A^{r'}_\omega $, if $r>1$.
	
	\item[\rom{2}:]  $f \in{\mathsf{D}}_{\omega} \, BMOA(\infty,\omega)$, if $r=1$.
	
	\item[\rom{3}:]  $\mathsf{R}^{W,\omega} f \in \mathcal{B}$, if $r<1$.
	
	\end{itemize}
	
In \rom{3} the weight $W=W_{1/r,\omega}$ is from \eqref{W}.
\end{coro}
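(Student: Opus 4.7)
The plan is to apply Theorem~\ref{TH1} to the measure $d\mu = \overline{f}\,d\omega$ and to identify the projections $P_\omega(\overline{\mu})$ and $P_W(\overline{\mu})$ explicitly in terms of $f$. Since $\omega$ is real-valued, $d\overline{\mu} = f\,d\omega$, so by the very definition of the Bergman projection
$$P_\omega(\overline{\mu})(z) = \int_{\mathbb{D}} \overline{B_z^\omega(\xi)}\,f(\xi)\,d\omega(\xi).$$
For $f \in \mathcal{H}(\overline{\mathbb{D}})$ the right-hand side equals $f(z)$ by the $A^2_\omega$ reproducing formula, and for general $f \in A^1_\omega$ the identity persists by density: the dilates $f_\rho$ lie in $\mathcal{H}(\overline{\mathbb{D}})$ and converge to $f$ in $A^1_\omega$, while $B_z^\omega$ is uniformly bounded on $\mathbb{D}$, so one can pass to the limit inside the integral.

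With $P_\omega(\overline{\mu}) = f$ in hand, cases~\rom{1} and~\rom{2} of Theorem~\ref{TH1} reduce immediately to the claimed conditions $f \in A^{r'}_\omega$ and $f \in \mathsf{D}_\omega BMOA(\infty,\omega)$, respectively. For case~\rom{3} one invokes the intertwining identity $\mathsf{R}^{\omega,W}P_\omega(\nu) = P_W(\nu)$ recorded in Section~2 with $\nu = \overline{\mu}$, which gives $P_W(\overline{\mu}) = \mathsf{R}^{\omega,W} f$. Since $\mathsf{R}^{W,\omega}$ is a bijection on $\mathcal{H}(\mathbb{D})$ inverse to $\mathsf{R}^{\omega,W}$, the membership $P_W(\overline{\mu}) \in \mathcal{B}$ guaranteed by Theorem~\ref{TH1} translates into the stated fractional-derivative condition on $f$, completing case~\rom{3}.

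The whole argument is essentially formal: nothing deeper than Theorem~\ref{TH1} and the covariance of the Bergman projections under the fractional derivatives $\mathsf{R}^{\omega,\nu}$ is required, and the norm equivalences $\|\textbf{H}_\mu\|_{A^p_\omega\times A^q_\omega}\asymp \|f\|_X$ carry over directly from the corresponding equivalences in Theorem~\ref{TH1}. The only slightly non-formal step is the extension of the reproducing identity $P_\omega(\overline{\mu}) = f$ from $\mathcal{H}(\overline{\mathbb{D}})$ to all of $A^1_\omega$, which is the main place one has to say anything at all, but it is standard.
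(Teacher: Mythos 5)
Your route is exactly the one the paper intends (it gives no separate proof of the corollary): substitute $d\mu=\overline{f}\,d\omega$ into Theorem \ref{TH1}, identify $P_\omega(\overline{\mu})=P_\omega[f]=f$ --- your dilation argument extending the reproducing formula from $\mathcal{H}(\overline{\mathbb{D}})$ to $A^1_\omega$ is fine, since $\|f_\rho-f\|_{1,\omega}\to 0$ for radial $\omega$ and $B_z^\omega$ is bounded on $\overline{\mathbb{D}}$ --- and use the intertwining identity $\mathsf{R}^{\omega,\nu}P_\omega=P_\nu$ from Section 2 for the third case. Cases \rom{1} and \rom{2}, and the transfer of the norm equivalences, are complete as you state them.

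The final sentence of case \rom{3}, however, does not follow. Your computation correctly gives $P_W(\overline{\mu})=\mathsf{R}^{\omega,W}f$, so Theorem \ref{TH1} characterizes boundedness by $\mathsf{R}^{\omega,W}f\in\mathcal{B}$, i.e.\ $f\in\mathsf{R}^{W,\omega}\mathcal{B}$. The condition printed in the corollary, $\mathsf{R}^{W,\omega}f\in\mathcal{B}$, is the different statement $f\in\mathsf{R}^{\omega,W}\mathcal{B}$, and the fact that $\mathsf{R}^{\omega,W}$ and $\mathsf{R}^{W,\omega}$ are mutually inverse bijections of $\mathcal{H}(\mathbb{D})$ does not let you pass from one membership to the other: applying the inverse operator to $f$ inside the condition is not the same as pulling the space $\mathcal{B}$ back under it. For a standard weight $\omega_\alpha$ the gap is concrete: $\mathsf{R}^{\omega,W}$ multiplies Taylor coefficients by roughly $n^{(\alpha+2)(1/r-1)}$, so the condition you actually derived says that a fractional derivative of $f$ is Bloch, i.e.\ $f$ lies in the H\"older class $\Lambda_{(\alpha+2)(1/r-1)}$ --- this is the classical result and matches the paper's own follow-up remark that $\mathsf{R}^{W,\omega}$ is a fractional integral and the symbol class is a Lipschitz-type space (namely $\mathsf{R}^{W,\omega}\mathcal{B}$). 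The literal condition $\mathsf{R}^{W,\omega}f\in\mathcal{B}$ describes a strictly larger class in that setting (it holds for every Bloch $f$, since fractional integration of positive order maps $\mathcal{B}$ into a H\"older class), so read literally the ``if'' direction of \rom{3} would already fail for standard weights. The fix is to conclude case \rom{3} with $\mathsf{R}^{\omega,W}f\in\mathcal{B}$, equivalently $f\in\mathsf{R}^{W,\omega}\mathcal{B}$, and to note explicitly that the corollary's superscripts must be read in this sense, rather than asserting, as you do, that the derived membership ``translates into'' the printed one.
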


Observe that $\mathsf{R}^{W,\omega}$ should actually be understood as a fractional integral (anti-derivative). Therefore, the space in question could be thought as an analogue of the analytic Lipschitz spaces, see for instance \cite{zhuholom}. In fact, if $\omega(z)=(\alpha+1)(1-|z|^2)^{\alpha}$ is standard weight, then $W_{1/r,\omega}$ is comparable to $(1-|z|^2)^{(\alpha+2)/r-2}$, which recovers the classical result.

Specializing Theorem \ref{TH1b} to the standard weights $\omega_\alpha(z)=(\alpha+1)(1-|z|^2)^\alpha$, we know that for $1<p<\infty$, $H_\mu^{\omega_\alpha} :A^p_{\omega_\alpha} \to A^p_{\omega_\alpha}$ is bounded if and only if 
$$P_{\omega_\alpha}[\overline{\mu}](z)=\int_{\mathbb{D}}\frac{d\overline{\mu}(\xi)}{(1-z\overline{\xi})^{2+\alpha}}(\alpha+1)(1-|\xi|^2)^\alpha dA(\xi)$$ belongs to the Bloch space. For every $t>0$ this is equivalent with 
$$\sup_{z \in \mathbb{D}}(1-|z|^2)^t|\mathsf{R}^{\omega_\alpha,\omega_{\alpha+t}}P_{\omega_\alpha}[\overline{\mu}](z)|<\infty.$$ That is,
$$\sup_{z \in \mathbb{D}}\left|\int_{\mathbb{D}}\frac{(1-|z|^2)^t}{(1-z\overline{\xi})^{2+\alpha+t}}d\overline{\mu}(\xi)\right|<\infty.$$

This is a minor addition to the result of Zhou \cite{zhzh}, which covers the case $t=\alpha+2$ as here any positive $t$ works. Using Theorem \ref{bfrac}, one could also obtain formulas with general weights.

There has been some work on Toeplitz and Hankel operators with symbols outside the space of complex Borel measures. We mention \cite{PTV, RV, TasVir}. For instance, in \cite{RV} Rozenblum and Vasilevski studied Toeplitz operators induced by sesquilinear forms. Motivated by this, if $\Phi:\mathcal{H}(\overline{\mathbb{D}})\times \mathcal{H}(\overline{\mathbb{D}}) \to \mathbb{C}$ is a bilinear form, one can simply define
$$H_\Phi(f,g)=\Phi(f,g),$$
in which case Hankel forms simply agree with the bilinear forms. Alternatively, for a linear form $\tau$ on $\mathcal{H}(\overline{\mathbb{D}})$, one might define
$$H_\tau(f,g)=\tau(fg).$$
With some additional requirements on $\Phi$ and $\tau$ one might be able to obtain a theory similar to that of the classical Hankel forms with very general symbols, for instance distributions or hyperfunctions.



\end{document}